\numberwithin{equation}{section}
\numberwithin{figure}{section}
\theoremstyle{plain}
\newtheorem{thm}{\protect\theoremname}
\theoremstyle{definition}
\newtheorem{defn}[thm]{\protect\definitionname}
\theoremstyle{definition}
\newtheorem{example}[thm]{\protect\examplename}
\newtheorem{theorem}{Theorem}[section]
\theoremstyle{definition}
\theoremstyle{corollary}
\newtheorem{corollary}[theorem]{Corollary}
\theoremstyle{note}
\theoremstyle{remark}
\numberwithin{equation}{section}
\newcommand{\bs}{\backslash}
\providecommand{\definitionname}{Definition}
\providecommand{\examplename}{Example}
\providecommand{\theoremname}{Theorem}
\begin{document}

\title{Studies of certain classes of functions and its connection with $S$-embeddedness }

\author{Biswajit Mitra }

\address{Department of Mathematics. The University of Burdwan, Burdwan 713104,
West Bengal, India }

\email{bmitra@math.buruniv.ac.in}

\author{Sanjib Das }

\address{Department of Mathematics. The University of Burdwan, Burdwan 713104,
West Bengal, India}

\email{ruin.sanjibdas893@gmail.com}

\subjclass[2020]{Primary 54C30; Secondary 54C10, 54C20, 54C45}

\keywords{$C$and $C^{*}$-embedding, Hard set, Pseudocompact, Nearly pseudocompact.}

\thanks{The second author's research grant is supported by CSIR, Human Resource
Development group, New Delhi-110012, India }

\begin{abstract}
We call a function $f$ in $C(X)$ to be hard-bounded if $f$ is bounded on every hard subset, a special kind of closed subset, of $X$. We call a subset $T$ of $X$ to be $S$-embedded if every hard-bounded continuous function of $T$ can be continuously extended upto $X$. Every $S$-embedded subset is $C^*$-embedded. In this paper we have given a characterization of the converse part. To get the converse, we came across a type of function which are bounded away from zero on every hard subset of a subset. We further studied few properties of this type of functions and also of hard-bounded functions.
\end{abstract}
\maketitle

\section{introduction}

Through out this paper, we shall always assume a space to be Tychonoff
unless otherwise mentioned. For a space $X$, $C(X)$ and $C^{*}(X)$
usually denote respectively the rings of real-valued and bounded real-valued
continuous functions on $X$. $\beta X$ and $\upsilon X$ respectively
denote the Stone-$\check{C}$ech compactification and Hewitt realcompactification
of a space $X$. A subset $H$ of $X$ is called hard in $X$ if $H$
is closed in $X\cup K$ where $K=cl_{\beta X}(\upsilon X\backslash X)$.
The notion of hard set was introduced by M. C. Rayburn in the year
1976 \cite{gj60}. In our last paper \cite{mdC},  ``Construction of Nearly
Pseudocompactifications''  we introduced the notion of $S$-embeddedness.
A continuous function $f$ on $X$ is called hard-bounded if $f$
is bounded on every hard subset of $X$. Let $S(X)$ denote the family
of all hard-bounded continuous functions. Then $S(X)$ forms a subring
of $C(X)$ containing $C^{*}(X)$. A subset $T$ of $X$ is called
$S$-embedded in $X$ if every hard-bounded continuous function of
$T$ has a continuous extension upto $X$. In our last paper, we introduced
this notion in order to study some extension properties of nearly
pseudocompactification. No further investigations were done in that
paper.

In this paper, we are focused in detailing of $S$-embeddedness. We
first proved a criteria for $S$-embeddedness. We have shown, a $C^{*}$-embedded
subset $T$ of $X$ is $S$-embedded if and only if $T$ is completely
separated from every zero set $Z(g)$ in $X$ where $g$ is bounded
away from zero on every hard subset of $T$. Then we have obtained
some necessary conditions of a continuous function $f$ which is bounded
away from zero on every hard subset of $T$. Indeed we have shown
that within the class of pseudocompact spaces a real-valued continuous
function is bounded away from zero on every hard subset of $T$ if
and only if $Z(f)$ and any hard subset of $T$ are completely separated.
Finally we have investigated the following problem. If a sequence
of hard-bounded continuous functions converges uniformly to a function
$f$ then under what restriction, is $f$ hard-bounded ? We have given
a necessary and sufficient condition to the above problem.

\section{Preliminaries}

In this paper, we used the most of preliminary concepts, notations
and terminologies from the classic monograph of L.Gillman and M.Jerison,
Rings of Continuous Functions \cite{gj60}. However for ready references,
we recall few notations, frequently used over here. For any $f\in C(X)$
or $C^{*}(X)$, $Z(f)=\{x\in X:f(x)=0\}$, called zero set of $f$.
Complement of zero set is called cozero set or cozero part of $f$,
denoted as $cozf$. For any $f\in C(X)$ or $C^{*}(X)$, $cl_{X}(X\bs Z(f))$
is called the support of $f$. Two subsets $A$ and $B$ of $X$ are
said to be completely separated in $X$ if $A$ and $B$ are contained
in two disjoint zero sets. A subspace $Y$ of $X$ is called $C$-embedded
in $X$ if every function in $C(Y)$ can be extended to a function
in $C(X)$ and $Y$ is $C^{*}$-embedded in $X$ if every function
in $C^{*}(Y)$ can be extended to a continuous function in $C(X)$.
If $f\in C(X)$ is unbounded on $E$, then $E$ contains a copy of
$\mathds{N}$, $C$-embedded in $X$ along which $f$ tends to $\infty$.
A function $f$ in $C(X)$ is called bounded away from zero on $X$
if there exists $m>0$ such that $|f(x)|\geq m$ for all $x\in X$.
A non-zero function $f\in C(X)$ (or $C^{*}(X)$) is a unit if and
only if $Z(f)=\phi$(or $f$ is bounded away from zero). A space is
realcompact if and only if every $z$-ultrafilter with countable intersection
property is fixed. In the year 1976, Rayburn \cite{r76} introduced
hard set.
\begin{defn}
A subspace $H$ of $X$ is called hard in $X$ if $H$ is closed in
$X\cup K$, $K=cl_{\beta X}(\upsilon X\bs X)$ where $\beta X$ and
$\upsilon X$ are the Stone-$\check{C}$ech compactification and Hewitt
realcompactification of $X$ respectively.
\end{defn}

It immediately follows that every hard set is closed in $X$, but
the converse is obviously not true. Clearly every compact subset of
$X$ is hard, but the converse may not be true. A hard set is compact
if and only if $X$ is nearly pseudocompact.

\section{main results}

In this section we shall discuss about hard-bounded continuous functions
and $S$-embedded subsets of a space $X$.
\begin{defn}
A continuous function is said to be hard-bounded if it is bounded
on every hard subsets of $X$.
\end{defn}

Let $X$ be a Tychonoff space and $S(X)$ denotes the family of all
hard-bounded continuous functions on $X$, that is $S(X)=\{f\in C(X):f$
is bounded on every hard set in $X\}$. Then $S(X)$ is an intermediate
subring of $C(X)$ containing $C^{*}(X)$. It is given in \cite{hr80} that $X$ is 
nearly pseudocompact if and only if $S(X) = C(X)$. It will be an interesting project 
to investigate those spaces where $S(X)=C^*(X)$. However our present interest lies on $S$-embeddedness and some related
results. 
\begin{defn}
A subset $T$ of $X$ is said to be $S$-embedded if any hard bounded
continuous function on $T$ can be continuously extended upto $X$.

It follows that every $S$-embedded subset is $C^{*}$-embedded. We
now give a necessary and sufficient condition for a $C^{*}$-embedded
subset to be $S$-embedded.
\end{defn}

\begin{thm}
A $C^{*}$-embedded subset $T$ of $X$ is $S$-embedded if and only
if $T$ is completely separated from every zero set $Z(g)$ in $X$
where $g$ is bounded away from zero on every hard subset of $T$.
\end{thm}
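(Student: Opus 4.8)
The plan is to prove both implications directly, in each case using the \emph{reciprocal} of the relevant function as the bridge between ``hard-bounded'' and ``bounded away from zero on hard subsets'', so that one of the two embedding hypotheses can be invoked to extend it.

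For necessity, suppose $T$ is $S$-embedded and let $g\in C(X)$ be bounded away from zero on every hard subset of $T$. First I would observe that every singleton $\{p\}$ with $p\in T$ is compact, hence hard in $T$, so $g$ is bounded away from zero on $\{p\}$; in particular $g(p)\neq 0$. Thus $g$ has no zero on $T$ and $1/(g|_{T})\in C(T)$. If $H\subseteq T$ is hard in $T$ with $|g|\geq m_{H}>0$ on $H$, then $|1/g|\leq 1/m_{H}$ on $H$, so $1/(g|_{T})$ is hard-bounded on $T$. By $S$-embeddedness it extends to some $\varphi\in C(X)$, and then $\varphi g\in C(X)$ satisfies $\varphi g\equiv 1$ on $T$. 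Consequently $T\subseteq Z(\varphi g-1)$ and $Z(g)\subseteq Z(\varphi g)$, while $Z(\varphi g-1)\cap Z(\varphi g)=\emptyset$; so $T$ and $Z(g)$ lie in disjoint zero sets, i.e. they are completely separated in $X$.

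For sufficiency, assume $T$ is $C^{*}$-embedded and has the stated separation property, and let $f\in C(T)$ be hard-bounded. I would first reduce to the case $f\geq 1$: putting $f_{1}=1+\max\{f,0\}$ and $f_{2}=1+\max\{-f,0\}$ gives $f=f_{1}-f_{2}$ with $f_{1},f_{2}\geq 1$, both in $C(T)$ and both hard-bounded on $T$, so it suffices to extend each of them. So assume $f\geq 1$ and put $h=1/f\in C^{*}(T)$, so $0<h\leq 1$; if $|f|\leq M_{H}$ on a hard set $H\subseteq T$ then $h\geq 1/M_{H}$ there, so $h$ is bounded away from zero on every hard subset of $T$. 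By $C^{*}$-embeddedness, $h$ extends to $\bar h\in C(X)$, and truncating into $[0,1]$ keeps it an extension, so assume $0\leq\bar h\leq 1$. Since $\bar h|_{T}=h$, $\bar h$ is still bounded away from zero on every hard subset of $T$, so the hypothesis applies to $g=\bar h$: $T$ is completely separated from $Z(\bar h)$, hence there is $k\in C^{*}(X)$ with $0\leq k\leq 1$, $k\equiv 0$ on $T$ and $k\equiv 1$ on $Z(\bar h)$. Then $\bar h+k$ has no zero (a common zero of $\bar h$ and $k$ is impossible, since $\bar h(x)=0$ forces $k(x)=1$), so it is a unit of $C(X)$ and $1/(\bar h+k)\in C(X)$; on $T$ it equals $1/h=f$, the desired extension.

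The step I expect to be the main (if mild) obstacle is resisting the temptation, after forming $\bar h+k$, to try to prove it bounded away from zero on $X$ so as to bound its reciprocal --- this is false on a general non-compact $X$, and it is unnecessary: zero-freeness alone already makes $\bar h+k$ a unit of $C(X)$, so $1/(\bar h+k)$ is automatically a continuous function on $X$, which is all that is needed. The two other points requiring a little care are the reduction to $f\geq 1$ (to avoid sign ambiguities when passing to $1/f$, together with the routine check that $f_{1},f_{2}$ inherit hard-boundedness) and, on the necessity side, the remark that hardness of singletons forces $g$ to be zero-free on $T$, which is exactly what makes $1/(g|_{T})$ a legitimate hard-bounded element of $C(T)$ to feed into $S$-embeddedness.
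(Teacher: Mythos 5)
Your proof is correct. The necessity half is essentially the paper's own argument: you extend $1/(g|_{T})$ by $S$-embeddedness and use the product $\varphi g$ (which is $1$ on $T$ and $0$ on $Z(g)$) to separate. The sufficiency half, however, takes a genuinely different route. The paper transports $f$ to the bounded function $\tan^{-1}f$, extends it to $h\in C(X)$ by $C^{*}$-embeddedness, applies the separation hypothesis to the zero set $Z\bigl((|h|-\tfrac{\pi}{2})\wedge 0\bigr)$ --- the locus where $|h|\geq\tfrac{\pi}{2}$, i.e.\ where $\tan\circ h$ would blow up --- and then multiplies $h$ by the separating function before composing with $\tan$. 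You instead reduce to $f\geq 1$ via the decomposition $f=(1+f^{+})-(1+f^{-})$ and use the reciprocal $1/f$ as the bounded transform, applying the hypothesis to $Z(\bar h)$ and repairing the extension by adding the separating function $k$ rather than multiplying by it, so that $1/(\bar h+k)$ is the extension. Both arguments turn on the same pivot: hard-boundedness of $f$ is exactly what keeps the bounded transform away from its singular boundary value on every hard subset of $T$, which is what lets the separation hypothesis be invoked. Your version buys a cleaner verification at that pivot (the inequality $\bar h\geq 1/M_{H}$ on $H$ replaces the paper's chain of arctan estimates) and needs only the standard fact that a zero-free element of $C(X)$ is a unit; the cost is the bookkeeping of extending two functions instead of one. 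Your parenthetical warning is also well taken: $\bar h+k$ need not be bounded away from zero on $X$, and nothing in the argument requires it.
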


\begin{proof}
Suppose, $T$ be a $C^{*}$-embedded subset of $X$, which is $S$-embedded
in $X$, that is every hard-bounded continuous function on $T$ can
be continuously extended upto $X$. Now let $Z(g)$ be a zero set
in $X$ such that $g$ is bounded away from zero on every hard subset
of $T$. Clearly $Z(g)\cap T=\phi$ and $g$ is bounded away from
zero on every hard zero subset of $T$, so $\frac{1}{g}|_{T}=\gamma$
is a hard-bounded cotinuous function on $T$. Let $H$ be a hard set
in $T$, then there exists $m>0$ such that $|g(x)|\geq m$, for all
$x\in H$ $\Rightarrow$ $|\frac{1}{g}(x)|\leq\frac{1}{m}$, for all
$x\in H$ $\Rightarrow$$|\gamma(x)|\leq\frac{1}{m}$, for all $x\in H$
$\Rightarrow$$\gamma$ is bounded on every hard set in $T$. So $\gamma$
is a hard-bounded continuous function on $T$. Since $T$ is $S$-embedded
in $X$, so there exists $h\in C(X)$ such that $h|_{T}=\gamma$.
Hence $hg\in C(X)$. Now on $T$, $hg(x)=h(x).g(x)=\gamma(x).g(x)=\frac{1}{g(x)}.g(x)=1$
and $hg(x)=h(x).g(x)=0$ when $x\in Z(g)$. Therefore $T$ and $Z(g)$
are completely separated.

Conversely, let $T$ is completely separated from every zero set $Z(g)$
in $X$ where $g$ is bounded away from zero on every hard subset
of $T$. Let $T$ is $C^{*}$-embedded in $X$. Let $f\in S(T)$,
that is $f$ is hard-bounded continuous function on $T$. To show
there exists $u\in C(X)$ such that $u|_{T}=f$. Take $tan^{-1}f\in C^{*}(T)$.
As $T$ is $C^{*}$-embedded in $X$, there exists a continuous map
$h:X\to\mathds{R}$ such that $h|_{T}=tan^{-1}f$. Now let $Z=\{x\in X:|h(x)|\geq\frac{\pi}{2}\}=\{x\in X:(|h(x)|-\frac{\pi}{2})\geq0\}=Z((|h(x)|-\frac{\pi}{2})\wedge0)=Z(g)$
where $g=(|h|-\frac{\pi}{2})\wedge0$. Clearly $Z(g)\cap T=\phi$.
Remaining to show that $g$ is bounded away from zero on every hard
set in $T$. Let $H$ be a hard set in $T$. Then there exists $m>0$
such that $|f(x)|\leq m$, for all $x\in H$, as $f\in S(T)$. Now
$tan^{-1}|f(x)|<tan^{-1}m<tan^{-1}(m+1)<\frac{\pi}{2}$, since $tan^{-1}$
is an increasing map. Again $-m<f(x)<m\Rightarrow-\frac{\pi}{2}<tan^{-1}(-m)<tan^{-1}f(x)<tan^{-1}m<\frac{\pi}{2}\Rightarrow-\frac{\pi}{2}<-tan^{-1}m<tan^{-1}f(x)<tan^{-1}m<\frac{\pi}{2}$.
Therefore we have $|tan^{-1}f(x)|<|tan^{-1}m|\Rightarrow|tan^{-1}f(x)|-\frac{\pi}{2}<|tan^{-1}m|-\frac{\pi}{2}<0\Rightarrow(|tan^{-1}f(x)|-\frac{\pi}{2})\wedge0<(|tan^{-1}m|-\frac{\pi}{2})\wedge0=|tan^{-1}m|-\frac{\pi}{2}$,
as $|tan^{-1}m|-\frac{\pi}{2}<0$. Hence $|(|tan^{-1}f(x)|-\frac{\pi}{2})\wedge0|>|(|tan^{-1}m|-\frac{\pi}{2})|>0$,
that is $|(|h(x)|-\frac{\pi}{2})\wedge0|>|(|tan^{-1}m|-\frac{\pi}{2})|$
or, $|g(x)|>u_{m}$, for all $x\in H$ where $u_{m}=|(|tan^{-1}m|-\frac{\pi}{2})|>0\Rightarrow$$g$
is bounded away from zero on every hard set in $T$. So we can completely
separate $T$ and $Z(g)$. There exists $l\in C(X)$ such that $l(T)=\{1\}$
and $l(Z(g))=\{0\}$ and $|l|\leq1$, then $lh\in C(X)$ with $|lh|<\frac{\pi}{2}$.
So $tan\circ lh\in C(X)$ and for $x\in T$, $(tan\circ lh)(x)=tan(l(x).h(x))=tan(1.h(x))=tan(h(x))=tan(tan^{-1}f(x))=f(x)$,
that is $(tan\circ lh)|_{T}=f$. Therefore $T$ is $S$-embedded in
$X$.
\end{proof}
\begin{thm}
$f$is hard-bounded continuous function on $X$ if and only if f is
bounded on every realcompact cozero set in $X.$
\end{thm}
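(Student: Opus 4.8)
The plan is to establish both implications by contraposition, in each case reducing the unboundedness of $f$ to unboundedness along a $C$-embedded copy of $\mathbb{N}$ --- using the fact recalled in the Preliminaries (from \cite{gj60}) that if $f\in C(X)$ is unbounded on a set $E$ then $E$ contains a copy $N$ of $\mathbb{N}$, $C$-embedded in $X$, along which $f$ tends to $\infty$ --- and then passing between ``hard'' and ``realcompact cozero set'' through closures in $\beta X$ relative to $\upsilon X$ and $K=cl_{\beta X}(\upsilon X\setminus X)$.

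For the implication ``bounded on every realcompact cozero set $\Rightarrow$ hard-bounded'', I would suppose $f$ is unbounded on a hard set $H$, take such an $N\subseteq H$, and first check that $N$ itself is hard: $N$ is closed in $X$, while $cl_{\beta X}N\cap K\subseteq cl_{\beta X}H\cap K\subseteq H$ forces $cl_{\beta X}N\cap K\subseteq X\cap cl_{\beta X}N=N$. Since $N$ is a $C$-embedded copy of the locally compact realcompact space $\mathbb{N}$, $cl_{\upsilon X}N=N$ and $N$ is open in $cl_{\beta X}N=\beta N$, so $F:=cl_{\beta X}N\setminus N$ is a compact subset of $\beta X\setminus X$ disjoint from $K$. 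Using normality of $\beta X$ I would separate $F$ from the closed set $K$ by a cozero set $V$ of $\beta X$ with $cl_{\beta X}V\cap K=\emptyset$; then $cl_{\beta X}N\setminus V$ is a compact subset of $N$, hence finite, so after discarding those points one is left with a cofinite $N'\subseteq V\cap X$ on which $f$ is still unbounded. As $V$ misses $K\supseteq\upsilon X\setminus X$, we have $V\cap X=V\cap\upsilon X$, which is a cozero subspace of the realcompact space $\upsilon X$ and hence a realcompact cozero set of $X$; this contradicts the hypothesis.

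For the implication ``hard-bounded $\Rightarrow$ bounded on every realcompact cozero set'', I would suppose $f$ is unbounded on a realcompact cozero set $C=\mathrm{coz}\,g$, take $N\subseteq C$ as above, and again argue that $N$ is hard. As before $N$ is closed in $X$ and $cl_{\beta X}N\setminus N\subseteq\beta X\setminus\upsilon X$, so it remains to show $cl_{\beta X}N\cap K=\emptyset$. I would use realcompactness of $C$ in the following form: replacing $g$ by $g_{0}\in C^{*}(X)$ with $\mathrm{coz}\,g_{0}=C$, the extension of $g_{0}$ over $\upsilon X$ vanishes on $\upsilon X\setminus X$ (equivalently, $C$ is a cozero set of $\upsilon X$ that is contained in $X$). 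Since $N$ is $C$-embedded, the assignment $x_{n}\mapsto 1/g_{0}(x_{n})$ extends to some $\nu\in C(X)$ with $\nu\ge1$; then $\rho:=(g_{0}\nu)\wedge2\in C^{*}(X)$ is identically $1$ on $N$, so its Stone extension $\bar\rho$ is identically $1$ on $cl_{\beta X}N$, whereas $\rho\le g_{0}\nu$ and $\nu\ge1$ together with the vanishing of the extension of $g_{0}$ force $\bar\rho$ to vanish on $\upsilon X\setminus X$, hence on $K$ by continuity. Therefore $cl_{\beta X}N\subseteq\mathrm{coz}\,\bar\rho$ is disjoint from $K\subseteq Z(\bar\rho)$, so $N$ is hard, and $f$ is bounded on $N$ --- a contradiction.

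The main obstacle I foresee is the auxiliary fact invoked in the third paragraph: that realcompactness of $\mathrm{coz}\,g$ forces the extension of $g$ over $\upsilon X$ to vanish outside $X$. This should reduce to the (standard, but not entirely obvious) fact that the trace on $X$ of a cozero set of $\upsilon X$ is $C$-embedded in that cozero set, which identifies the Hewitt realcompactification of $C=\mathrm{coz}\,g$ with $\mathrm{coz}(\upsilon g)$; then realcompactness of $C$ makes the two coincide, so $\mathrm{coz}(\upsilon g)\subseteq X$. The other implication is comparatively routine, being a separation argument in the compact space $\beta X$ together with the standard fact that cozero subspaces of realcompact spaces are realcompact.
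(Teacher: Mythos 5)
Your argument is correct in outline and, where it matters, in detail; but it takes a partly different route from the paper's. For the direction ``hard-bounded $\Rightarrow$ bounded on realcompact cozero sets'' both you and the paper extract a $C$-embedded copy $N$ of $\mathds{N}$ from the cozero set $P=coz\,g$ and show $N$ is hard; the paper does this by asserting that $N$ and $X\setminus P$ are completely separated and then invoking (implicitly) a characterization of hard sets, whereas you verify hardness directly from the definition by producing $\rho=(g_{0}\nu)\wedge 2$ whose Stone extension is $1$ on $cl_{\beta X}N$ and $0$ on $K=cl_{\beta X}(\upsilon X\setminus X)$ --- essentially the same complete-separation idea, carried out inside $\beta X$. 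For the converse the two proofs genuinely diverge: the paper appeals to Rayburn's characterization of hard sets via a compact set $K'$ (choosing $V=\{|f|<r\}\supseteq K'$ and using that $H\setminus V$ sits inside a realcompact cozero set), while you contrapose, extract a second $C$-embedded copy of $\mathds{N}$ inside the hard set, and manufacture the required realcompact cozero set by separating the compact set $cl_{\beta X}N\setminus N$ from $K$ in $\beta X$. Your version is longer but self-contained (it needs only the definition of hardness, normality of $\beta X$, and the fact that cozero sets of realcompact spaces are realcompact), whereas the paper's is shorter at the price of quoting Rayburn's structure theorem without statement or proof.

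The one point you flag --- that realcompactness of $C=coz\,g_{0}$ forces $g_{0}^{\upsilon}$ to vanish on $\upsilon X\setminus X$ --- is indeed the only nontrivial lemma, and it is true; it can be closed without the cozero-set identification $\upsilon(coz\,g_{0})=coz(g_{0}^{\upsilon})$ you propose. If $g_{0}^{\upsilon}(p)=c>0$ for some $p\in\upsilon X\setminus X$, put $A=\{x\in X:g_{0}(x)\geq c/2\}=Z\bigl((c/2-g_{0})\vee 0\bigr)$; then $A$ is a zero set of $X$, closed in $C$ and hence realcompact, and $p\in cl_{\upsilon X}A$ since $Z(h^{\upsilon})=cl_{\upsilon X}Z(h)$ for any $h\in C(X)$. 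But a realcompact zero set is closed in $\upsilon X$: a point of $cl_{\upsilon X}A\setminus X$ yields, by tracing the $z$-ultrafilter $A^{p}$ on $A$ (zero sets of $A$ being traces of zero sets of $X$), a free $z$-ultrafilter on $A$ with the countable intersection property, contradicting realcompactness. This gives $p\in A\subseteq X$, a contradiction, and your third paragraph then goes through as written.
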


\begin{proof}
Let $f$ be a hard-bounded continuous function on $X.$ Let $P$ be
a realcompact cozero set in $X.$ If $f$ is unbounded on $P.$Then
$P$ contains a $C-embedded$ copy of $\mathbb{\mathbb{N}}$ along
which $f$ tends to infinity. Then $\mathbb{N}$and $X\backslash P$
are completely separated. So $\mathbb{N}$ is hard in $X$ on which
$f$ is unbounded, a contradiction.
\end{proof}
Conversely, let $H$ be a hard set in $X.$ There exists a compact
set $K$ which satisfies the property of hardness. There exist a positive
real number $r$ such that $K\subseteq{x\in X:|f(x)|<r}=V(say)$.
Then $H\backslash V$ is contained in a realcompact cozero set. Thus
$f$ is bounded on $H\backslash V$ also. Hence $f$ is bounded on
$H.$
\begin{thm}
If $Y$ is expressed as finite union of hard sets and $Y$ is $C^{*}$-embedded
then $Y$ is $S$-embedded.
\end{thm}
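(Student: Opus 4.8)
**Proof proposal for Theorem (finite union of hard sets, $C^*$-embedded $\Rightarrow$ $S$-embedded):**

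The plan is to apply Theorem 3.5 (the $S$-embedding criterion): since $Y$ is already assumed $C^*$-embedded, it suffices to show that $Y$ is completely separated from every zero set $Z(g)$ in $X$ for which $g$ is bounded away from zero on every hard subset of $Y$. So fix such a $g \in C(X)$, write $Y = H_1 \cup H_2 \cup \cdots \cup H_n$ with each $H_i$ hard in $X$, and aim to produce the complete separation.

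First I would observe that the hardness is inherited downward in the right way: each $H_i$, being hard in $X$, is in particular a hard subset of $Y$ (a closed subset of a hard set, sitting inside $X \cup K$, is still closed there). Hence by hypothesis $g$ is bounded away from zero on each $H_i$ — say $|g| \ge m_i > 0$ on $H_i$. Taking $m = \min\{m_1, \dots, m_n\} > 0$ gives $|g(x)| \ge m$ for all $x \in Y$; in particular $Z(g) \cap Y = \phi$. This is the step where finiteness of the union is essential — an infinite union would only give pointwise positivity of $|g|$ on $Y$, not a uniform bound, and the argument would collapse.

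Next I would convert the uniform lower bound on $|g|$ over $Y$ into an honest complete separation in $X$. Consider the function $\varphi = (|g| - m/2) \vee 0 \in C(X)$ and the function $\psi = (m/2 - |g|) \vee 0 \in C(X)$ (or equivalently work with $\min\{|g|, m/2\}$). On $Y$ we have $|g| \ge m$, so $\varphi \ge m/2 > 0$ there, while on $Z(g)$ we have $\varphi = 0$; symmetrically $\psi = 0$ on $Y$ and $\psi = m/2 > 0$ on $Z(g)$ (since $g = 0 < m/2$ there). The zero sets $Z(\varphi) \supseteq Z(g)$ and $Z(\psi) \supseteq Y$ are disjoint — a point in both would need $|g| \ge m/2$ and $|g| \le m/2$ simultaneously together with the strict inequalities, which is impossible. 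Thus $Y$ and $Z(g)$ lie in disjoint zero sets of $X$, i.e. they are completely separated. Then Theorem 3.5 applies directly and concludes that $Y$ is $S$-embedded.

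The only genuine obstacle is the one already flagged: passing from "$g$ is bounded away from zero on each $H_i$ individually" to "$g$ is bounded away from zero on $Y = \bigcup H_i$." This is immediate from finiteness (take the minimum of finitely many positive bounds), but it is exactly the place the hypothesis on $Y$ is used, and it is worth stating explicitly why each $H_i$ counts as a hard subset of $Y$ so that the hypothesis on $g$ may be invoked. Everything else — the lattice-operation construction of the two disjoint zero sets, and the appeal to Theorem 3.5 — is routine.
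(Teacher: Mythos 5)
Your overall strategy coincides with the paper's: you reduce to the complete--separation criterion for $C^{*}$-embedded sets and use the finiteness of the union $Y=\bigcup_{i=1}^{n}H_{i}$ to pass from a lower bound for $|g|$ on each $H_{i}$ to control on all of $Y$. (The paper covers each $H_{i}$ by the zero set $\{x:|g(x)|\geq n_{i}\}$ and takes the finite union of these zero sets, whereas you take the minimum $m=\min\{m_{1},\dots,m_{n}\}$; these are interchangeable.) There is, however, a genuine slip in your last step: the two zero sets you exhibit, $Z(\varphi)=\{x:|g(x)|\leq m/2\}$ and $Z(\psi)=\{x:|g(x)|\geq m/2\}$, are \emph{not} disjoint in general --- they meet on the level set $\{x:|g(x)|=m/2\}$, and membership in a zero set is a non-strict condition, so your appeal to ``strict inequalities'' does not apply. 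The repair is immediate: use two distinct thresholds (say $\{|g|\leq m/3\}$ and $\{|g|\geq 2m/3\}$), or, more simply, note that $Z(g)$ and $\{x:|g(x)|\geq m\}\supseteq Y$ are already disjoint zero sets, and disjoint zero sets are completely separated. One further caveat, which applies equally to the paper's own proof: the criterion requires $g$ to be bounded away from zero on hard subsets \emph{of $Y$}, so one must know that each $H_{i}$ (hard in $X$ and contained in $Y$) is a hard subset of $Y$ in the intrinsic sense; your parenthetical justification only shows $H_{i}$ is hard in $X$, so this point deserves an explicit argument.
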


\begin{proof}
Let $Y=\bigcup_{i=1}^{m}H_{i}$. Let $g\in C(X)$ such that $g$ is
bounded away from zero on every hard subset of $Y$. For each $i$,
there exists $n_{i}>0$ such that $Z_{n_{i}}^{g}=\{x:|g(x)|\geq n_{i}\}\supset H_{i}\Rightarrow\bigcup_{i=1}^{m}Z_{n_{i}}^{g}$
is a zero set and call it $Z$. Then $Z\cap Z(g)=\phi$ and $Y\subset Z.$
So $Z(g)$ is completely separated with $Y$. Hence $Y$ is $S$-embedded.
\end{proof}
We shall now investigate nature of hard-bounded continuous functions
under uniform convergence.
\begin{thm}
Let $\{f_{n}\}$ be a sequence of hard-bounded continuous function
such that $f_{n}\rightarrow f$ uniformly. Then $f$ is hard-bounded
continuous if and only if for every hard set $H$ in $X$, there exists
a sequence $\{m_{n}^{H}\}$ and $m^{H}$ such that $m_{n}^{H}\leq m^{H}$
for all $n\geq k$ for some integer $k$ and $|f_{n}(H)|\leq m_{n}^{H}$,
$|f(H)|\leq m_{H}$.
\end{thm}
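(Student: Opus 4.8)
The plan is to prove both implications directly from the definition of hard-boundedness, the only substantial ingredient being that uniform convergence controls $|f_{n}-f|$ on all of $X$ at once, hence on every hard subset of $X$ simultaneously. Observe first that $f$, being a uniform limit of the continuous functions $f_{n}$, is automatically continuous, so in each direction the phrase ``hard-bounded continuous'' amounts simply to ``bounded on every hard subset of $X$'', i.e. to membership in $S(X)$.

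For the forward implication I would assume $f\in S(X)$ and fix a hard set $H$ in $X$. Hard-boundedness of $f$ supplies $m_{H}>0$ with $|f(x)|\leq m_{H}$ for all $x\in H$. Using uniform convergence, choose an integer $k$ with $\sup_{x\in X}|f_{n}(x)-f(x)|<1$ whenever $n\geq k$; then for such $n$ and every $x\in H$ one has $|f_{n}(x)|\leq|f(x)|+|f_{n}(x)-f(x)|<m_{H}+1$. For the finitely many indices $n<k$, hard-boundedness of each $f_{n}$ yields a constant $c_{n}>0$ with $|f_{n}(x)|\leq c_{n}$ on $H$. Put $m_{n}^{H}=c_{n}$ for $n<k$, $m_{n}^{H}=m_{H}+1$ for $n\geq k$, and $m^{H}=m_{H}+1$. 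Then $|f_{n}(H)|\leq m_{n}^{H}$ for every $n$, $m_{n}^{H}\leq m^{H}$ for all $n\geq k$, and $|f(H)|\leq m_{H}$, which is exactly the stated condition.

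For the converse I would fix an arbitrary hard set $H$ and invoke the hypothesis to obtain the sequence $\{m_{n}^{H}\}$, the bound $m^{H}$, the index $k$, and the constant $m_{H}$. The clause $|f(H)|\leq m_{H}$ already asserts that $f$ is bounded on $H$; since $H$ was an arbitrary hard subset, $f\in S(X)$. (If one wishes to avoid leaning on that clause, fix a single $n\geq k$ with $\sup_{X}|f_{n}-f|<1$; then $|f(x)|\leq|f_{n}(x)|+1\leq m_{n}^{H}+1\leq m^{H}+1$ for all $x\in H$, which again forces $f$ to be hard-bounded.) Hence $f$ is hard-bounded continuous.

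I expect no genuine obstacle in this argument; the one point requiring care is the use of \emph{uniform}, rather than merely pointwise, convergence, since it is precisely this that lets a single tail index $k$ and the single additive correction $1$ serve for every hard set simultaneously, and it is worth noting that the tail of $\{m_{n}^{H}\}$ may be chosen constant in $n$, so that its eventual domination by $m^{H}$ is automatic.
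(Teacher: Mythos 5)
Your proof is correct and follows essentially the same route as the paper's: a direct verification using the uniform bound $\sup_X|f_n-f|$ in the forward direction and reading off boundedness of $f$ on each hard set in the converse. In fact your forward direction is slightly more careful than the paper's, which takes $m_n^H=\sup_{x\in H}|f_n(x)|$ and ``lets $\eps\to 0$'' even though the tail index depends on $\eps$; your device of enlarging the bound to $m_H+1$ avoids that issue cleanly.
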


\begin{proof}
Suppose $f$ is hard-bounded continuous function on $X$. Let $H$
be a hard set in $X$. Let $m_{n}^{H}=sup_{x\in H}|f_{n}(x)|$. Now
for all $x\in H$, $|f_{n}(x)|\leq|f_{n}(x)-f(x)|+|f(x)|$. As $f_{n}$
converges to $f$ uniformly, for any $\epsilon>0$, there exists $n>0$
such that for all $m\geq n$, $|f_{m}(x)-f(x)|<\epsilon$. So $|f_{m}(x)|\leq|f_{m}(x)-f(x)|+|f(x)|<\epsilon+|f(x)|$
for all $x\in H$. Since $f$ is hard-bounded, there exists $m^{H}$
such that $|f(x)|\leq m^{H}$ for all $x\in H$ $\Rightarrow$$|f_{m}(x)|<\epsilon+m^{H}$
for all $x\in H$ $\Rightarrow$$sup_{x\in H}|f_{m}(x)|\leq\epsilon+m^{H}\Rightarrow m_{n}^{H}\leq\epsilon+m^{H}$.
Letting $\epsilon\rightarrow0$, we get $m_{m}^{H}\leq m^{H}$ for
all $m\geq n$.

Conversely, let for every hard set $H$ in $X$, there exists a sequence
$\{m_{n}^{H}\}$ and $m^{H}$ such that $m_{n}^{H}\leq m^{H}$ for
all $n\geq k$ for some integer $k$ and $|f_{n}(H)|\leq m_{n}^{H}$,
$|f(H)|\leq m_{H}$. We have to show that $f$ is hard-bounded. Let,
$H$ be a hard set in $X$. Therefore $|f_{n}(H)|\leq m_{n}^{H}$
and $m_{n}^{H}\leq m^{H}$ for all $n\geq k$ $\Rightarrow$$|f_{n}(H)|\leq m^{H}\Rightarrow|f(H)|\leq m^{H}$.
Hence $f$ is hard-bounded continuous map.
\end{proof}
In next few theorems, we discussed about those function which are
bounded away from zero on every hard subset of a subset of $X$:
\begin{thm}
Let $X$ be a pseudocompact space. An $f\in C(X)$ is bounded away
from zero on every hard subset of $T$ if and only if $Z(f)\cap T=\phi$.
\end{thm}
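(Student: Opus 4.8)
The plan is to reduce the statement to the single observation that, in a pseudocompact space, the hard sets are exactly the compact sets; once that is in hand, both implications are immediate.

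First I would record the reduction. Since $X$ is pseudocompact we have $C(X)=C^{*}(X)$, hence $\upsilon X=\beta X$, hence $\upsilon X\bs X=\beta X\bs X$, and therefore $K=cl_{\beta X}(\upsilon X\bs X)=cl_{\beta X}(\beta X\bs X)$, which gives $X\cup K=\beta X$. Thus a subset $H\subseteq X$ is hard in $X$ iff it is closed in $\beta X$, i.e. iff $H$ is compact (compare the remark of Section~2 that a hard set is compact exactly when $X$ is nearly pseudocompact, applied to the pseudocompact case). In particular every hard subset of $T$ is a compact subset of $T$, and every one-point subset of $T$ is a hard subset of $T$.

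For the ``only if'' direction, suppose $f$ is bounded away from zero on every hard subset of $T$ and fix $x\in T$. Applying the hypothesis to the hard set $\{x\}$ produces $m>0$ with $|f(x)|\geq m>0$, so $x\notin Z(f)$; as $x$ was arbitrary, $Z(f)\cap T=\phi$. Note this half needs only that singletons are hard, not pseudocompactness.

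For the ``if'' direction, suppose $Z(f)\cap T=\phi$ and let $H$ be a hard subset of $T$. By the reduction $H$ is compact, and since $H\subseteq T$ we get $Z(f)\cap H=\phi$, i.e. $f$ is nowhere zero on $H$. Then $|f|$ is a strictly positive continuous function on the compact set $H$, so it attains a minimum $m=\min_{x\in H}|f(x)|>0$ (if $H=\phi$ there is nothing to prove); hence $|f(x)|\geq m$ for all $x\in H$, i.e. $f$ is bounded away from zero on $H$. The only real obstacle is the opening move: recognising that pseudocompactness forces $\upsilon X=\beta X$ and so turns ``hard'' into ``compact.'' After that, the ``only if'' part is a one-point observation and the ``if'' part is just the extreme value theorem on a compact set.
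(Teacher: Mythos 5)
Your ``only if'' half is fine: singletons are compact, hence hard, so $f$ cannot vanish at any point of $T$; no pseudocompactness is needed there. The difficulty is the reduction driving your ``if'' half. In this paper ``hard subset of $T$'' means hard \emph{in the space} $T$, i.e.\ closed in $T\cup cl_{\beta T}(\upsilon T\bs T)$ --- this is forced by the proof of the $S$-embedding criterion, where the reciprocal of such a $g$ must land in $S(T)$, and by the Example immediately following the present theorem, where $X=\mathds{N}$, $T=2\mathds{N}$ and the hard subsets of $T$ are taken to be \emph{all} closed subsets of $T$. Your computation correctly shows that pseudocompactness of $X$ gives $\upsilon X=\beta X$ and hence that the sets hard \emph{in $X$} are exactly the compact sets; but pseudocompactness does not pass to the subspace $T$, so nothing forces the sets hard in $T$ to be compact. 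Concretely, Mr\'owka's space $\Psi=\mathds{N}\cup\mathcal{A}$ is pseudocompact while the subspace $T=\mathds{N}$ is countable, discrete and realcompact, so every subset of $T$ --- in particular every infinite, non-compact one --- is hard in $T$. On such an $H$ the extreme value theorem gives you nothing, and your argument stops exactly there. (If the intended reading were ``hard in $X$ and contained in $T$,'' your proof would be complete and considerably shorter than the paper's; but that reading is inconsistent with the paper's other uses of the phrase.)

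The paper's proof is shaped by precisely this obstruction: it makes no use of compactness of $H$. Assuming $f$ is not bounded away from zero on some hard $H\subseteq T$ while $Z(f)\cap T=\phi$, it extracts a sequence $(x_{n})$ in $H$ with $0<|f(x_{n})|<1/n$ and $|f(x_{n})|$ strictly decreasing, encloses the points in closed sets $V_{n}$ on which the values $|f|$ lie in pairwise disjoint intervals, argues that the resulting family is locally finite, and invokes Exercise 3L of Gillman--Jerison to conclude that $\{x_{n}\}$ is a $C$-embedded copy of $\mathds{N}$ in $X$, contradicting pseudocompactness of $X$ itself (not of $T$). If you want to salvage your streamlined approach, you must either add a hypothesis guaranteeing that the hard subsets of $T$ are compact (e.g.\ $T$ nearly pseudocompact, which is the paper's subsequent theorem) or replace the extreme-value step by a sequential argument of this kind.
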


\begin{proof}
Suppose $f\in C(X)$ is bounded away from zero on every hard subset
of $T$. Then, $\{x\}$ being hard subset of $T$, $f(x)\neq0$ for
all $x\in T$. Therefore $Z(f)\cap T=\phi$.

Conversely, suppose $Z(f)\cap T=\phi$. We have to show that, $f$
is bounded away from zero on every hard subset of $T$. Let us assume
that $f$ is not bounded away from zero on every hard subset of $T$.
Then there exists a hard subset $H$ of $T$ in which $f$ is not
bounded away from zero. So for every $m>0$, $H\nsubseteq\{x\in X:|f(x)|\geq m\}\Rightarrow\forall m$
there exists $x_{m}\in H$ such that $|f(x_{m})|<m$. Again, as $Z(f)\cap H=\phi$,
so $|f(x)|>0$, $\forall m$. Then we have a sequence $\{x_{m}\}$
in $H$ satisfying the following properties: $|f(x_{1})|<1$ and $|f(x_{n})|<min\{|f(x_{n-1})|,\frac{1}{n}\}$.
So $|f(x_{n})|<|f(x_{n-1})|$ and $|f(x_{n})|<\frac{1}{n}$, for all
$n\in\mathds{N}$. Now we choose a sequence of closed intervals $\{I_{n}\}$
such that $|f(x_{n})|\in intI_{n}$, for all $n\in\mathds{N}$ and
$I_{n}\cap I_{m}=\phi$, for all $n\neq m$. Accordingly we have a
sequence of closed sets $\{V_{n}\}$ such that $x_{n}\in intV_{n}\subset V_{n}$
and for all $x\in V_{n}$, $|f(x)|\in I_{n}$. Clearly $V_{n}\cap V_{m}=\phi$
for $m\neq n$ and $V_{n}\neq\phi$ for all $n$. We shall now show
that for $m\in\mathds{N}$, $\bigcup_{n\neq m}V_{n}$ is closed in
$X$. Let $x\in X\backslash\bigcup_{n\neq m}V_{n}$. Then $|f(x)|>0$
and there exists a $k$ such that $|f(x_{k+1})|\leq|f(x)|<|f(x_{k})|$
for $k\leq1$ or $|f(x)|\geq|f(x_{1})|$. In any case there exists
a neighbourhood of $x$ which intersects at most two $V_{k}$'s. Thus
$\{V_{n}:n\neq m\}$ is locally finite and hence $\bigcup_{n\neq m}V_{n}$
is closed in $X$. By \cite{gj60} (Exercise 3L.), $N=\{x_{n}:n\in\mathds{N}\}$
is a copy of $\mathds{N}$, $C$-embedded in $X$. Therefore $X$
is not pseudocompact, a contradiction.
\end{proof}
The following counter example asserts that we can not drop the condition
of pseudocompactness of $X$ in the above theorem.
\begin{example}
Take $X=\mathds{N}$ with discrete topology. Take $T=2\mathds{N}$.
As $T$ is realcompact, hard sets of $T$ are precisely the closed
subsets of $T$. Also note that, any subset of a discrete space is
a zero set. We can construct a continuous function $f:X\rightarrow\mathds{R}$
satisfying $Z(f)=X\backslash T$ and $f(x)=\frac{1}{x}$, whenever
$x\in T$. It is easy to observe that $Z(f)\cap T=\phi$, but $f$
is not bounded away from zero on every hard subset of $T$.
\end{example}

\begin{thm}
Let $T$ be a nearly pseudocompact subset of $X$. An $f\in C(X)$
is bounded away from zero on every hard subset of $T$ if and only
if $Z(f)\cap T=\emptyset.$
\end{thm}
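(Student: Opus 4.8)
The plan is to reduce everything to the fact recorded in the Preliminaries that a hard set is compact if and only if the ambient space is nearly pseudocompact. Since $T$ is assumed to be nearly pseudocompact, applying that fact to $T$ in place of $X$ shows that every hard subset of $T$ is compact. Once this is available, the theorem becomes an immediate consequence of the behaviour of a non-vanishing continuous function on a compact set.

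For the ``only if'' direction I would repeat the trivial observation used in the pseudocompact case: for each $x\in T$ the singleton $\{x\}$ is compact, hence hard in $T$, so if $f$ is bounded away from zero on every hard subset of $T$ then in particular $f(x)\neq 0$, whence $Z(f)\cap T=\emptyset$. For the ``if'' direction, suppose $Z(f)\cap T=\emptyset$ and let $H$ be an arbitrary hard subset of $T$. By the reduction above $H$ is compact, and $f$ has no zero on $H$ because $H\subseteq T$; therefore the continuous function $|f|$ attains a minimum value $m$ on $H$ and $m>0$. Thus $|f(x)|\geq m$ for every $x\in H$, i.e.\ $f$ is bounded away from zero on $H$, and since $H$ was arbitrary we are done.

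I do not anticipate a genuine obstacle here. The only point requiring care is the bookkeeping: one must read ``hard subset of $T$'' as hard in the subspace $T$, and invoke the near pseudocompactness of $T$ (not of $X$) to replace hardness by compactness. It is worth noting that, unlike the pseudocompact case, no hypothesis is placed on $X$ itself, and the proof avoids the copy-of-$\mathbb{N}$ construction entirely: once the hard subsets of $T$ are known to be compact, the extreme value theorem does all the work.
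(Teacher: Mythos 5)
Your proof is correct, and it takes a genuinely different and more economical route than the paper. The paper proves this theorem by piggy-backing on its Theorem 3.7 (the pseudocompact case): assuming $f$ is not bounded away from zero on some hard $H\subseteq T$, it reruns the construction of a strictly decreasing sequence $|f(x_n)|\to 0$, the locally finite family $\{V_n\}$, and the resulting $C$-embedded copy $N$ of $\mathds{N}$; it then observes that $N$ is hard in $T$, so by near pseudocompactness $N$ would be compact, which is absurd. You instead apply the fact ``hard $\Leftrightarrow$ compact in a nearly pseudocompact space'' directly to the hard set $H$ itself, so that $H$ is compact, $|f|$ attains a positive minimum on it, and the extreme value theorem finishes the argument; the forward direction via singletons is the same in both. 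Both arguments hinge on the same cited fact about nearly pseudocompact spaces, but yours applies it to $H$ rather than to an auxiliary set that must first be built and then shown to be hard, so you avoid the entire copy-of-$\mathds{N}$ machinery and the (unstated in the paper) verification that a closed subset of a hard set is hard. What the paper's route buys is only expository reuse of the previous proof; what yours buys is a shorter, self-contained, and arguably more transparent argument that makes plain that no pseudocompactness-style sequence construction is needed once hard subsets of $T$ are known to be compact. Your bookkeeping remarks (hardness taken in the subspace $T$, near pseudocompactness of $T$ rather than of $X$) are exactly the right points to flag.
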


\begin{proof}
Following the proof of Theorem 3 above, the set $N$ is hard in $T$.
As $T$ is nearly pseudocompact, $N$ is compact which is absurd.
Rest follows from the above proof of Theorem{[}3{]}.
\end{proof}
\begin{thm}
$f\in S(X)$ is a unit of $S(X)$ if and only if $f$ is bounded away
from zero on every hard subset of $X$.
\end{thm}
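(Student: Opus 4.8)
The plan is to unwind the definition of a unit in the ring $S(X)$ and to exploit the elementary fact, recalled in Section 2, that every compact subset of $X$ — in particular every singleton — is hard in $X$.

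For the forward implication, I would suppose $f$ is a unit of $S(X)$, so that there is some $g \in S(X)$ with $fg = 1$ identically on $X$. This immediately forces $f(x) \neq 0$ for every $x \in X$, hence $Z(f) = \emptyset$ and $g = 1/f$ on all of $X$. Now fix an arbitrary hard subset $H$ of $X$. Since $g \in S(X)$, $g$ is bounded on $H$, say $|g(x)| \le M$ for all $x \in H$; then $|f(x)| = 1/|g(x)| \ge 1/M > 0$ for all $x \in H$, which is precisely the statement that $f$ is bounded away from zero on $H$. As $H$ was arbitrary, $f$ is bounded away from zero on every hard subset of $X$.

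For the converse, I would suppose $f \in S(X)$ is bounded away from zero on every hard subset of $X$. Applying the hypothesis to the hard set $\{x\}$ for each $x \in X$ gives $f(x) \neq 0$, so $Z(f) = \emptyset$ and $1/f \in C(X)$. It then remains only to verify that $1/f \in S(X)$: given a hard set $H$ in $X$, pick $m > 0$ with $|f(x)| \ge m$ for all $x \in H$; then $|(1/f)(x)| \le 1/m$ on $H$, so $1/f$ is bounded on $H$. Hence $1/f$ is hard-bounded, i.e. $1/f \in S(X)$, and $f \cdot (1/f) = 1$ exhibits $f$ as a unit of $S(X)$.

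I do not expect a real obstacle here; the only subtlety worth flagging is that being a unit of $S(X)$, as opposed to a unit of $C(X)$, requires the inverse $1/f$ to lie in $S(X)$ — and that requirement is exactly what "bounded away from zero on every hard subset" encodes. This also explains why the ambient hypothesis $f \in S(X)$ cannot be dropped: a function that is bounded away from zero on every hard set need not itself be hard-bounded, so without assuming $f \in S(X)$ one would at best obtain a unit of $C(X)$ rather than of $S(X)$.
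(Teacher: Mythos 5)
Your proposal is correct and follows essentially the same route as the paper's own proof: in the forward direction you use the hard-boundedness of the inverse $g$ on an arbitrary hard set $H$ to get the lower bound $|f|\geq 1/M$ on $H$, and in the converse you use hardness of singletons to get $Z(f)=\emptyset$ and then check directly that $1/f$ is bounded on every hard set. The closing remark about why the hypothesis $f\in S(X)$ matters is a sensible observation but not needed for the argument.
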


\begin{proof}
Let $f\in S(X)$ be a unit of $S(X)$. So there exists $g\in S(X)$
such that $f.g=1$. Let $H$ be a hard subset of $X$. Since $g\in S(X)$,
$|g(x)|\leq m$ for all $x\in H$, for some $m>0$ $\Rightarrow$$|f(x)|\geq\frac{1}{m}$
for all $x\in H$$\Rightarrow$$f$ is bounded away from zero on $H$.
Hence $H$ being arbitrary, $f$ is bounded away from zero on every
hard subset of $X$.

Conversely, suppose that $f\in S(X)$ is bounded away from zero on
every hard subset of $X$. Then $f(x)\neq0$ for all $x\in X$, since
$\{x\}$ being compact, it is hard and hence $f$ can not be zero
at $x$. So $\frac{1}{f}$ exists and it is continuous on $X$. Now
we have to show $\frac{1}{f}\in S(X)$. Let $H$ be a hard subset
of $X$. By our assumption $|f(x)|\geq m_{H}$ for all $x\in H$ for
some $m_{H}>0$ $\Rightarrow$$\frac{\ensuremath{1}}{|f(x)|}\leq\frac{1}{m_{H}}$
for all $x\in H$ i.e. $|\frac{1}{f}(x)|\leq\frac{1}{m_{H}}$ for
all $x\in H$ $\Rightarrow$$\frac{1}{f}$ is bounded on $H$ . Hence
$H$ being arbitrary, $\frac{1}{f}$ is bounded on every hard subset
of $X$ i.e. $\frac{1}{f}\in S(X)$. Therefore $f$ is a unit of $S(X)$.
\end{proof}
\begin{corollary}
A $C^{*}$-embedded subset $T$ of $X$ is $S$-embedded if and only
if for all $f\in C(X)$, $f|_{T}$ is a unit of $S(T)$ and $Z(f)$,
$T$ are completely separated.
\end{corollary}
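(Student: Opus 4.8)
The plan is to derive this corollary directly from Theorem 3.1 (the main characterization of $S$-embeddedness) by reconciling two descriptions of the functions $g$ that appear there. Recall Theorem 3.1 says: a $C^*$-embedded subset $T$ of $X$ is $S$-embedded if and only if $T$ is completely separated from every zero set $Z(g)$ in $X$ where $g$ is bounded away from zero on every hard subset of $T$. So I would argue that the condition ``$f|_T$ is a unit of $S(T)$ and $Z(f)$, $T$ are completely separated for all $f\in C(X)$'' is logically equivalent to the condition appearing in Theorem 3.1.

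First I would establish the forward direction of that equivalence. Suppose $T$ is completely separated from $Z(g)$ whenever $g\in C(X)$ is bounded away from zero on every hard subset of $T$. Take any $f\in C(X)$. I claim $f|_T$ is bounded away from zero on every hard subset of $T$: indeed, a hard subset $H$ of $T$ is hard in $T$, and I need $f$ to be bounded below on $H$. This is where I would invoke Theorem 3.8 (units of $S(T)$): to show $f|_T$ is a unit of $S(T)$ it suffices to show $f|_T\in S(T)$ and $f|_T$ is bounded away from zero on every hard subset of $T$. But wait --- the hypothesis of Theorem 3.1 already quantifies over exactly those $g$ that are bounded away from zero on every hard subset of $T$, so the two conditions will match once I check that ``$f|_T$ bounded away from zero on every hard subset of $T$'' together with ``$f|_T \in S(T)$'' characterizes the relevant $f$'s. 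The subtlety I anticipate is the discrepancy between ``$g$ bounded away from zero on every hard subset of $T$'' (the Theorem 3.1 hypothesis, a condition on $g\in C(X)$ restricted to hard subsets of $T$) and ``$f|_T$ is a unit of $S(T)$'' (which by Theorem 3.8 additionally requires $f|_T\in S(T)$, i.e. $f|_T$ is hard-bounded on $T$).

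So the real content is: when $T$ is completely separated from $Z(g)$ for all such $g$, is every $g$ that is bounded away from zero on every hard subset of $T$ automatically hard-bounded on $T$ (so that Theorem 3.8 applies and $g|_T$ is a unit of $S(T)$)? Here I would use the fact that $S$-embeddedness of $T$ (which we may assume holds, proving one direction at a time) implies every hard-bounded function on $T$ extends, and conversely the hypotheses force enough separation. Concretely, for the direction ``conditions $\Rightarrow$ $S$-embedded'': given $g$ bounded away from zero on every hard subset of $T$, I want to produce the complete separation of $T$ and $Z(g)$ demanded by Theorem 3.1. The hypothesis gives me this separation directly once I note $g\in C(X)$ qualifies. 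For the direction ``$S$-embedded $\Rightarrow$ conditions'': take $f\in C(X)$; since $Z(f)\cap T$ could be nonempty in general I'd need $f|_T$ to be a unit of $S(T)$ as part of the hypothesis being proven --- so actually the cleanest route is to show the corollary's condition and Theorem 3.1's condition are each equivalent to $S$-embeddedness, handling the quantifier ``for all $f\in C(X)$'' carefully: the functions $g$ in Theorem 3.1 are exactly the $f\in C(X)$ with $f|_T$ bounded away from zero on hard subsets of $T$, and for such $f$, complete separation of $T$ and $Z(f)$ forces $Z(f)\cap T=\emptyset$, hence $f|_T$ never vanishes; combined with hard-boundedness (which I'd need to argue follows, perhaps from the complete separation via a partition-of-unity/Urysohn argument bounding $f|_T$ on each hard set), Theorem 3.8 gives that $f|_T$ is a unit of $S(T)$.

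The main obstacle I expect is precisely this bookkeeping around the hypothesis of Theorem 3.8: the corollary asserts $f|_T$ is a \emph{unit} of $S(T)$, which presupposes $f|_T\in S(T)$, whereas Theorem 3.1 only speaks of $g$ being bounded away from zero on hard subsets of $T$ without requiring $g|_T$ hard-bounded. I would resolve this by showing that, under complete separation of $T$ from $Z(g)$, $g|_T$ is in fact hard-bounded: given a hard set $H\subseteq T$, complete separation produces a cozero neighborhood of $T$ on which $g$ is bounded away from zero by some $m_H>0$, and $H$ is hard in $X$ too (being closed in $X\cup K$), so if $g$ were unbounded on $H$ we'd get a $C$-embedded copy of $\mathbb{N}$ inside $H$, which sits inside $T$ --- but $g$ bounded away from zero on hard subsets of $T$ doesn't immediately forbid that. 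I suspect one needs $g = 1/(f|_T)$-type reasoning: if $f\in C(X)$ and $T$ is completely separated from $Z(f)$, then there is $l\in C(X)$ with $l(T)=\{1\}$, $l(Z(f))=\{0\}$, $|l|\le 1$, and then $l/f$ might fail to be continuous where $l=0$ but $f=0$; instead one works with $1/f$ on the cozero set containing $T$. Once hard-boundedness of $g|_T$ on hard subsets of $T$ is secured, Theorem 3.8 applies cleanly and the rest is a direct translation between the two statements, so the corollary follows.
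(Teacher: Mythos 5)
You have correctly located the one real subtlety: ``$f|_{T}$ is a unit of $S(T)$'' is strictly stronger than ``$f$ is bounded away from zero on every hard subset of $T$,'' since the former also requires $f|_{T}\in S(T)$. The paper states the corollary without proof, the intended derivation being Theorem 3.1 combined with the unit characterization, and your forward direction (given that $f|_{T}$ is a unit, it is bounded away from zero on every hard subset of $T$, so Theorem 3.1 separates $T$ from $Z(f)$) is fine. The gap is in the converse. To obtain $S$-embeddedness via Theorem 3.1 you must separate $T$ from $Z(g)$ for \emph{every} $g$ bounded away from zero on each hard subset of $T$, whereas your hypothesis only covers those $g$ whose restriction lies in $S(T)$. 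Your proposed bridge --- that complete separation of $T$ from $Z(g)$ forces $g|_{T}$ to be hard-bounded --- is both circular (that separation is precisely what you are trying to establish for such a $g$) and false: take any $T$ with a noncompact hard subset $H$ and $g\in C(X)$ with $g\geq1$ everywhere but unbounded on $H$; then $Z(g)=\emptyset$ is trivially completely separated from $T$ and $g$ is bounded away from zero on every subset of $T$, yet $g|_{T}\notin S(T)$, so $g|_{T}$ is not a unit of $S(T)$ and your hypothesis says nothing about it.

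The repair is elementary and needs to be stated. Given $g$ bounded away from zero on every hard subset of $T$, pass to the truncation $f=(g\wedge1)\vee(-1)$. Then $Z(f)=Z(g)$, $|f|=\min(|g|,1)$ is still bounded away from zero on every hard subset of $T$, and $f\in C^{*}(X)$, so $f|_{T}\in C^{*}(T)\subseteq S(T)$ and Theorem 3.8 makes $f|_{T}$ a unit of $S(T)$; the corollary's hypothesis then separates $T$ from $Z(f)=Z(g)$, which is exactly the hypothesis of Theorem 3.1. (Equivalently, one may re-run the converse half of the proof of Theorem 3.1 and note that the witness $g=(|h|-\frac{\pi}{2})\wedge0$ constructed there takes values in $[-\frac{\pi}{2},0]$, hence is already bounded and its restriction to $T$ is automatically a unit of $S(T)$.) Without one of these observations your argument does not close.
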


Acknowledgements: The authors sincerely acknowledge the support received
from DST FIST programme (File No. SR/FST/MSII/2017/10(C))

\end{document}